\documentclass[11pt]{article}
\usepackage[a4paper,margin=28mm]{geometry}
\usepackage[T1]{fontenc}
\usepackage{lmodern,amsmath,amssymb,amsthm,mathtools,microtype}
\usepackage[colorlinks=true,linkcolor=blue,citecolor=blue,urlcolor=blue]{hyperref}
\numberwithin{equation}{section}
\newtheorem{theorem}{Theorem}[section]
\newtheorem{lemma}[theorem]{Lemma}
\newtheorem{proposition}[theorem]{Proposition}
\newtheorem{corollary}[theorem]{Corollary}
\theoremstyle{definition}
\newtheorem{example}[theorem]{Example}
\newcommand{\R}{\mathbb R}
\newcommand{\dA}{\dim_{\mathrm A}}
\newcommand{\dH}{\dim_{\mathrm H}}
\newcommand{\dP}{\dim_{\mathrm P}}
\newcommand{\Gr}{\operatorname{Gr}}
\newcommand{\rank}{\operatorname{rank}}
\title{Orthogonal Projections and Assouad Dimension: Resolving Fraser’s Questions}
 \author{Bilel Selmi\thanks{Corresponding author. Analysis, Probability and Fractals Laboratory LR18ES17, Department of Mathematics, Faculty of Sciences of Monastir, University of Monastir, 5000 Monastir, Tunisia. Email: \texttt{bilel.selmi@fsm.rnu.tn}.}}
\begin{document}
\maketitle
\begin{abstract}
We examine two questions on exceptional orthogonal projections for Assouad dimension. Let $1\le k<d$ and $1\le q<d$. For a set $F$ contained in a $q$-dimensional subspace $W\subset\R^d$ and containing a nonempty relatively open subset of $W$, we compute both the Hausdorff and packing dimensions of the exceptional $k$-planes as $k(d-k)-|q-k|-1$. In particular, a square in $\R^3$ has a one-dimensional set of exceptional plane projections. This disproves the bound $k(d-1-k)$ in Question 17.8.1 of Fraser's book as stated, for every $2\le k<d$. The planar packing question, Question 17.8.2, is already answered affirmatively by Wu's countability theorem. Using that theorem, we also describe the distinction between planar sets of Assouad dimension at most $1$ and those of dimension greater than $1$ when they are embedded in higher-dimensional spaces. The computations use elementary Grassmannian geometry; we make no claim of a new planar projection theorem or of an optimal bound for arbitrary higher-dimensional sets.
\end{abstract}
\noindent\textit{Keywords.} Assouad dimension, orthogonal projections, exceptional sets, Grassmannian.\par
\noindent\textit{2020 Mathematics Subject Classification.} 28A80, 28A78.

\section{Introduction}
Let $\Gr(k,d)$ denote the Grassmannian of $k$-dimensional linear subspaces of $\R^d$, and let $P_V$ be orthogonal projection onto $V\in\Gr(k,d)$. We use the metric $\|P_V-P_{V'}\|$, where the norm is the operator norm. The Grassmannian is a compact smooth manifold of dimension $k(d-k)$ and carries the rotation-invariant Borel probability measure $\gamma_{d,k}$. This is $G(k,d)$ in \cite{FraserBook} and $G(d,k)$ in several other references; throughout this note the first argument denotes the dimension of the subspace. For a nonempty set $F\subset\R^d$, define its exceptional set by
\begin{equation}\label{eq:exceptional}
\mathcal E_k(F)=\{V\in\Gr(k,d):\dA P_VF<\min\{k,\dA F\}\}.
\end{equation}
Fraser's Question 17.8.1 \cite[Section 17.8]{FraserBook}, repeated in the update list \cite{FraserUpdates}, asks whether
\begin{equation}\label{eq:question}
\dH\mathcal E_k(F)\le k(d-1-k)\qquad(1\le k<d)
\end{equation}
for every nonempty $F\subset\R^d$. The motivation is a set lying on a line $L$: all $V\subset L^\perp$ collapse this set to a point, and these subspaces form $\Gr(k,d-1)$. In that example the projected set is a singleton because $V$ is \emph{orthogonal} to $L$. Mere transversality to $L$ does not imply collapse.

The line example does not account for higher ranks. If $F$ is contained in a $k$-plane and has nonempty relative interior there, loss of even one rank gives a projection of dimension less than $k$. Theorem~\ref{thm:flat} computes the dimension of the entire set of such projections. It equals $k(d-k)-1$, which exceeds the right-hand side of \eqref{eq:question} by $k-1$ when $k\ge2$.

Question 17.8.2 asks whether $\dP\mathcal E_1(F)=0$ for every nonempty planar $F$. Orponen \cite{Orponen2021} proved that this exceptional set has Hausdorff dimension $0$. Subsequently, Wu \cite[Theorem 1.15]{Wu2025} proved that it is at most countable. Thus the packing question has an affirmative answer by an existing theorem. Section~\ref{sec:planar} records this consequence and applies it to sets contained in a two-dimensional subspace. Wu's theorem is an external premise here, rather than a result proved anew.

The general lower bound $\dA P_VF\ge\min\{k,\dA F\}$ for $\gamma_{d,k}$-almost every $V$ is established in \cite[Theorem 2.9]{Fraser2018}; see also \cite[Theorem 10.2.1]{FraserBook}. General exceptional-set estimates obtained from weak tangents are discussed in \cite[Theorem 2.10]{Fraser2018} and \cite[Section 2]{Fraser2023}. These almost-everywhere assertions allow positive-dimensional exceptional sets. The more recent projection results for the Assouad spectrum \cite{FFK2026} concern a different dimension and do not remove the rank obstruction in \eqref{eq:question}.

This note keeps the original Assouad dimension as the source and image dimension. The packing dimension in Question 17.8.2 measures a set of directions, not the dimension of the projected set. We give complete elementary proofs of the flat computations and the deductions from Wu's theorem. The higher-dimensional line-projection case of \eqref{eq:question}, and an optimal replacement for general $k$, are not resolved here.

\section{Rank Stratification of the Grassmannian}\label{sec:ranks}
For a set $E$ in Euclidean space, let $N_r(E)$ be the least number of balls of radius $r$ covering $E$. The Assouad dimension of $E$ is defined by
\[
\dA E=\inf\bigl\{s\ge0:\exists C>0\ \forall x\in E\ \forall\,0<r<R,
\ N_r(E\cap B(x,R))\le C(R/r)^s\bigr\}.
\]
We use this definition also for unbounded sets, with all positive scales $r<R$. For bounded sets it agrees with the definition restricted to sufficiently small $R$. Assouad dimension is monotone under inclusion and invariant under bi-Lipschitz maps; it is not in general nonincreasing under Lipschitz maps \cite[Chapter 2]{FraserBook}. Hausdorff and packing dimensions below are the usual dimensions for the specified metric, with both dimensions of the empty set taken to be $0$.

Fix $1\le k<d$ and a $q$-dimensional subspace $W\subset\R^d$, where $1\le q\le d$. For $0\le j\le\min\{q,k\}$, define
\[
\mathcal S_j(W)=\{V\in\Gr(k,d):\rank(P_V|_W)=j\}.
\]
The kernel of $P_V|_W$ is $W\cap V^\perp$. In particular, we have 
\begin{equation}\label{eq:kernel}
V\in\mathcal S_j(W)\quad\text{if and only if}\quad
\dim(W\cap V^\perp)=q-j.
\end{equation}

\begin{lemma}\label{lem:strata}
The stratum $\mathcal S_j(W)$ is nonempty exactly when
\begin{equation}\label{eq:feasible}
\max\{0,q+k-d\}\le j\le\min\{q,k\}.
\end{equation}
For each such $j$, it is a smooth manifold of dimension
\begin{equation}\label{eq:stratum-dimension}
\dH\mathcal S_j(W)=\dP\mathcal S_j(W)
=k(d-k)-(q-j)(k-j).
\end{equation}
\end{lemma}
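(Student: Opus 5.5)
By \eqref{eq:kernel}, $V\in\mathcal S_j(W)$ exactly when $\dim(W\cap V^\perp)=q-j$. Since $V\mapsto V^\perp$ is an isometry $\Gr(k,d)\to\Gr(d-k,d)$ for the metric $\|P_V-P_{V'}\|$ (because $P_{V^\perp}=I-P_V$), I will work with $U=V^\perp\in\Gr(d-k,d)$ and the classical Schubert-type condition $\dim(W\cap U)=m$, where $m=q-j$.

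For \emph{nonemptiness}, the constraints are immediate. We need $m\le\min\{q,d-k\}$, and by the dimension formula $m\ge q+(d-k)-d=q-k$. Also $m\ge0$. Translated into $j=q-m$, these give exactly \eqref{eq:feasible}. Conversely, for any feasible $m$ I will choose an orthonormal basis adapted to $W$ and take $U$ spanned by $m$ basis vectors of $W$ together with $d-k-m$ basis vectors of $W^\perp$. This requires $d-k-m\le d-q$, which is the condition $j\le k$, so $\dim(W\cap U)=m$.

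For the \emph{manifold structure and dimension}, I will parametrize the stratum by a fibration. Let $A=W\cap U\in\Gr(m,W)$, which ranges over a manifold of dimension $m(q-m)$. Given $A$, the subspace $U$ is determined by $U/A$, a $(d-k-m)$-dimensional subspace of $\R^d/A\cong A^\perp$ (dimension $d-m$) meeting $W\cap A^\perp$ (dimension $q-m$) only in $0$. Such transversal subspaces form an open dense subset of $\Gr(d-k-m,d-m)$, of dimension $(d-k-m)k$. Hence the stratum has dimension
\[
m(q-m)+k(d-k-m)=k(d-k)-m(k-q+m)=k(d-k)-(q-j)(k-j).
\]
To make this rigorous as a smooth manifold, I will use the standard chart. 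Near a fixed $U_0$ in the stratum, write $U$ as the graph of a linear map $T\colon U_0\to U_0^\perp$. Choose decompositions $U_0=A_0\oplus B_0$ with $A_0=W\cap U_0$, and $U_0^\perp=C_0\oplus D_0$, where $C_0$ is the image of $W$ in $U_0^\perp$ (of dimension $q-m$). The condition $\dim(W\cap U)=m$ becomes a rank condition on $T$: a certain block, essentially the composite $A_0\to C_0$ modulo a correction term, must vanish. Locally this is a submanifold of codimension $m(q-m)+\ldots$, and the codimension will come out as $m(k-j)$, matching the count above. Alternatively, I can present the stratum as an orbit of the Lie group $\mathrm{Stab}(W)=O(W)\times O(W^\perp)$ acting on $\Gr(k,d)$. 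The stratum is a single orbit, since the pair $(\dim W\cap U,\dim W^\perp\cap U)$ is determined by $m$. Orbits of compact group actions are embedded submanifolds, and the dimension can then be computed as the fibration count above. I prefer this orbit argument, since it yields embeddedness for free.

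Finally, since $\mathcal S_j(W)$ is an embedded smooth submanifold of dimension $n$ in the Riemannian manifold $\Gr(k,d)$, and the operator-norm metric is bi-Lipschitz equivalent to the Riemannian distance locally, its Hausdorff and packing dimensions both equal $n$. This uses the standard fact that a smooth $n$-manifold is a countable union of bi-Lipschitz images of $\R^n$-balls, together with countable stability of both dimensions.

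The main obstacle is checking that the stratum is indeed a single orbit and correctly computing the orbit dimension. The single-orbit claim needs the canonical form of a subspace relative to $W$ via principal angles, taking care over the intersections with $W^\perp$. I expect to sidestep the $W^\perp$ issue by noting that the fibration count already gives the dimension of an open dense part, while the lower-dimensional remainder does not raise the Hausdorff or packing dimension. If a full manifold structure is needed, I will fall back on the local graph chart.
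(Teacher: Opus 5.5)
There is a genuine gap. Your feasibility argument and the count $m(q-m)+k(d-k-m)$ are correct. Your fibration is also the paper's decomposition in another form: a $(d-k-m)$-plane in $A^\perp$ meeting $W\cap A^\perp$ trivially is the graph of a linear map from its projection $B=P_{W^\perp}U$ into $W\cap A^\perp$, which is the paper's $(A,B,T)$ description.

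The gap is the step that turns this count into an embedded submanifold. The lemma asserts this, and you also need it for the lower bounds on $\dH$ and $\dP$. Your preferred orbit argument fails, because $\mathcal S_j(W)$ is in general not an orbit of $O(W)\times O(W^\perp)$. For $d=2$ and $q=k=j=1$, the stratum consists of all lines except $W^\perp$, a one-dimensional set, while $O(W)\times O(W^\perp)\cong O(1)\times O(1)$ is finite. More generally, the singular values of $P_WP_U$ (the principal angles) are invariants of this compact group. They vary continuously along the stratum (rescale the paper's fiber map $T$), so the stratum splits into a continuum of orbits as soon as $j\ge1$ and $d-k-m\ge1$. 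Nor is $\dim(W^\perp\cap U)$ determined by $m$: for $d=3$, $q=2$, $k=1$, a plane $U\neq W$ may or may not contain $W^\perp$. Your ``open dense part plus lower-dimensional remainder'' fallback does not help, because the fibration already covers the whole stratum. What is missing is regularity of the inverse $U\mapsto(U\cap W,\,U\cap(U\cap W)^\perp)$, not the removal of a smaller exceptional piece.

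Your chart fallback, as sketched, targets the wrong block. Write $U=\{u+Tu:u\in U_0\}$ with $T\colon U_0\to U_0^\perp=C_0\oplus D_0$. Then $W\cap U=\ker\Phi_T$ for $\Phi_T(w)=P_{U_0^\perp}w-TP_{U_0}w$, so near $U_0$ the stratum is $\{\rank\Phi_T=q-m\}$. Split $W=(W\ominus A_0)\oplus A_0$. The block $W\ominus A_0\to C_0$ of $\Phi_T$ is invertible near $T=0$, so the condition is the vanishing of the Schur complement, which equals $-P_{D_0}T|_{A_0}$ plus a term of order $|T|^2$. Consequently:
\begin{itemize}
\item the $A_0\to C_0$ block is free, since it just moves $U\cap W$ inside $W$;
\item the defining equation takes values in $\mathrm{Hom}(A_0,D_0)$, which has dimension $m(k-j)$;
\item its linearization $T\mapsto P_{D_0}T|_{A_0}$ is onto, so the implicit function theorem gives a submanifold of codimension $m(k-j)$, not $m(q-m)+\cdots$.
\end{itemize}
With this correction your route is complete. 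It is a genuinely different and more explicit justification than the paper's, which asserts that $U\cap W$, $P_{W^\perp}U$ and $T$ are given locally by smooth fixed-rank-minor formulas, so that its bundle parametrization is a smooth embedding. The orbit idea can also be repaired. Use the non-compact group $\{g\in GL(d,\R):gW=W\}$, which is transitive on each stratum, together with the fact that a locally closed orbit of a Lie group action is an embedded submanifold.
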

\begin{proof}
Write $U=V^\perp$, $a=q-j$, and $b=d-k-q+j$. By \eqref{eq:kernel}, $A=U\cap W$ has dimension $a$. The image $B=P_{W^\perp}U$ has dimension $b$, since $\dim U=d-k=a+b$. Thus $0\le a\le q$ and $0\le b\le d-q$, which are exactly \eqref{eq:feasible}.

Conversely, choose $A\in\Gr(a,W)$ and $B\in\Gr(b,W^\perp)$. Here $\Gr(a,W)$ denotes the $a$-planes in $W$. For a linear map $T:B\to W\cap A^\perp$, set
\begin{equation}\label{eq:graph}
U=A\oplus\{z+Tz:z\in B\}\quad\text{and}\quad V=U^\perp.
\end{equation}
Then $\dim U=a+b=d-k$ and $U\cap W=A$, so \eqref{eq:kernel} gives $V\in\mathcal S_j(W)$. This also proves nonemptiness for every feasible $j$.

Every $U$ with $U\cap W=A$ has a unique representation \eqref{eq:graph}. Indeed, the restriction of $P_{W^\perp}$ to $U\cap A^\perp$ is a linear bijection onto $B$. Its inverse sends $z$ to $z+Tz$, determining $T$ uniquely. In local Grassmannian charts, smooth orthonormal frames for $A$, $W\cap A^\perp$, and $B$ identify these maps $T$ with real matrices of size $j$ by $b$. Formula \eqref{eq:graph} is smooth in these coordinates. Conversely, the intersection and image subspaces and this inverse depend smoothly on $U$ on any chart where the corresponding fixed-rank minors are invertible. Thus the stratum is the total space of a smooth vector bundle over
\[
\Gr(a,W)\times\Gr(b,W^\perp),
\]
whose fiber has dimension $jb$. Its dimension is
\[
a(q-a)+b(d-q-b)+jb=aj+bk=k(d-k)-(q-j)(k-j).
\]
This description also applies when $a=0$ or $b=0$; the corresponding Grassmannian or matrix space is a point.

On compact subsets of a smooth coordinate chart, the chart and its inverse are Lipschitz for the Grassmannian metric. A countable cover by such subsets gives the upper bounds in \eqref{eq:stratum-dimension}, by countable stability of Hausdorff and packing dimensions. A nonempty coordinate ball contains a bi-Lipschitz image of a Euclidean ball of the stated dimension, giving the matching lower bounds. In dimension $0$, the nonempty stratum is locally discrete and has both dimensions $0$.
\end{proof}

\begin{lemma}\label{lem:interior}
Let $F\subset W$ contain a nonempty relatively open subset of $W$. For every $V\in\Gr(k,d)$, we have 
\[
\dA F=q\quad\text{and}\quad\dA P_VF=\rank(P_V|_W).
\]
\end{lemma}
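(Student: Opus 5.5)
The plan is to sandwich both quantities between the dimension of a ball and the dimension of the ambient linear space, using only monotonicity and bi-Lipschitz invariance of $\dA$. Two standard facts are needed. First, any subset of an $m$-dimensional linear subspace has Assouad dimension at most $m$. Second, a nonempty relatively open subset of an $m$-dimensional subspace has Assouad dimension exactly $m$.

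For the upper bound, isometrically identify the subspace with $\R^m$. A ball $B(x,R)$ intersected with $\R^m$ is an $m$-dimensional ball, and it is covered by at most $C_m(R/r)^m$ balls of radius $r$, by a volume or grid count. This count passes to subsets because $N_r$ is monotone up to a factor: centres may be moved into the subset at the cost of doubling the radius, or one may simply use monotonicity of $\dA$ under inclusion. For the lower bound, let $B'\subset E$ be a relative ball of radius $\rho$. For $R=\rho$ centred at its centre and $r\to0$, we have $N_r(B')\ge c(\rho/r)^m$ by volume. So no exponent $s<m$ admits a uniform constant $C$, and $\dA E\ge m$.

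Applying these facts with $m=q$ to $F\subset W$, which contains a relatively open ball of $W$, gives $\dA F=q$.

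For the projection, set $j=\rank(P_V|_W)$ and let $W'=P_V W$, a $j$-dimensional subspace of $V$. Since $P_VF\subset W'$, the upper bound gives $\dA P_VF\le j$. For the lower bound, let $G\subset F$ be a relatively open ball of $W$. The linear map $P_V|_W:W\to W'$ is surjective, so it is an open map onto $W'$. Hence $P_VG$ contains a nonempty relatively open subset of $W'$, and monotonicity together with the second fact gives $\dA P_VF\ge j$.

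The case $j=0$ needs a separate remark: there $P_VF$ is a single point, which has Assouad dimension $0$ since $N_r$ of a point is $1$. I expect the main point requiring care to be the openness of the surjective linear map. I would justify it by choosing a complement $W_0$ of $\ker(P_V|_W)=W\cap V^\perp$ inside $W$, on which $P_V$ is a linear isomorphism onto $W'$ and hence bi-Lipschitz. The image of the relatively open set $G\cap(x+W_0)$, for $x$ the centre of $G$, is then relatively open in $W'$, after translating $W'$ to the corresponding affine copy through $P_Vx$. Here I note that $P_VW=W'$ is linear, and $P_V(x+W_0)=P_Vx+W'=W'$ since $P_Vx\in W'$. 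With this in hand, the remaining estimates are the routine covering counts described above.
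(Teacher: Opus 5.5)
Your proposal is correct and follows essentially the same route as the paper: it uses grid and volume counts for the upper and lower bounds in a $q$-plane and the containment $P_VF\subset P_VW$. It gets the lower bound from the fact that $P_V$ restricted to a complement of $\ker(P_V|_W)=W\cap V^\perp$ is a linear isomorphism onto $P_VW$, so the image of a relative ball contains a relative ball. The paper makes this last step quantitative by taking the orthogonal complement and the smallest singular value $\sigma$ to get $B_{P_VW}(P_Vx,\sigma R)\subset P_V(B_W(x,R))$, which is the same idea as your openness argument.
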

\begin{proof}
A subset of a $q$-plane has Assouad dimension at most $q$: covering a ball by a regular grid gives $N_r(E\cap B(x,R))\le C_q(R/r)^q$ for $0<r<R$. A ball contained in $F$ gives the reverse bound. In fact, volume comparison gives $N_r(B_W(x,R))\ge (R/r)^q$ whenever $B_W(x,R)\subset F$, so no exponent less than $q$ satisfies the definition.

Put $j=\rank(P_V|_W)$. The image $P_VF$ is contained in the $j$-plane $P_VW$, which gives $\dA P_VF\le j$. For $j>0$, let $\sigma>0$ be the smallest singular value of $P_V|_{W\cap\ker(P_V|_W)^\perp}$. If $B_W(x,R)\subset F$, then
\[
B_{P_VW}(P_Vx,\sigma R)\subset P_V(B_W(x,R))\subset P_VF.
\]
The volume argument in dimension $j$ gives $\dA P_VF\ge j$. If $j=0$, the nonempty image is a singleton.
\end{proof}

\section{Flat Sets and the Limitations of the Proposed Bound}\label{sec:flat}
\begin{theorem}\label{thm:flat}
Let $1\le k<d$, $1\le q<d$, and let $F\subset W$, where $W$ is a $q$-dimensional subspace of $\R^d$. Suppose that $F$ contains a nonempty relatively open subset of $W$. Then
\begin{equation}\label{eq:flat}
\dH\mathcal E_k(F)=\dP\mathcal E_k(F)=k(d-k)-|q-k|-1.
\end{equation}
If instead $q=d$ and $F$ contains a nonempty open subset of $\R^d$, then $\mathcal E_k(F)=\varnothing$.
\end{theorem}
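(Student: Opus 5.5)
By Lemma~\ref{lem:interior}, $\dA F=q$ and $\dA P_VF=\rank(P_V|_W)$ for every $V\in\Gr(k,d)$. So the defining condition in \eqref{eq:exceptional} becomes $\rank(P_V|_W)<\min\{k,q\}$, and the exceptional set is the union of lower strata:
\[
\mathcal E_k(F)=\bigcup_{j<\min\{q,k\}}\mathcal S_j(W).
\]
Only the feasible $j$ in \eqref{eq:feasible} contribute. The plan is to compute the dimension of this finite union with Lemma~\ref{lem:strata}, using finite stability of Hausdorff and packing dimension. Finite stability says the dimension of a finite union is the maximum over the pieces. Both dimensions agree on each stratum, so they will agree on the union.

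By \eqref{eq:stratum-dimension}, the codimension of $\mathcal S_j(W)$ is $(q-j)(k-j)$. For $j<\min\{q,k\}$ both factors are positive, and the product strictly decreases as $j$ increases. The largest stratum is therefore $j=\min\{q,k\}-1$, provided it is feasible. At that index one factor equals $1$ and the other equals $|q-k|+1$. The codimension is then $|q-k|+1$, which gives \eqref{eq:flat}.

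The step needing care is feasibility of $j=\min\{q,k\}-1$, that is, checking $\min\{q,k\}-1\ge\max\{0,q+k-d\}$.
\begin{itemize}
\item The inequality $\min\{q,k\}-1\ge0$ holds because $q,k\ge1$.
\item The inequality $\min\{q,k\}-1\ge q+k-d$ is equivalent to $\max\{q,k\}\le d-1$. This holds by the hypotheses $q<d$ and $k<d$.
\end{itemize}
So this stratum is nonempty. The exceptional set is then nonempty, and its dimension is exactly the stratum dimension, not the convention for the empty set. This is precisely where the hypothesis $q<d$ is used.

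For the case $q=d$, we have $W=\R^d$. Then $P_V|_W=P_V$ has rank $k=\min\{k,d\}$ for every $V$. By Lemma~\ref{lem:interior}, $\dA P_VF=k=\min\{k,\dA F\}$ for every $V$. Hence no $V$ is exceptional and $\mathcal E_k(F)=\varnothing$. Equivalently, \eqref{eq:feasible} forces $j=k$ when $q=d$, so no lower stratum exists.
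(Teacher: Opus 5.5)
Your proposal is correct and follows essentially the same route as the paper. You use Lemma~\ref{lem:interior} to write $\mathcal E_k(F)$ as the union of the strata $\mathcal S_j(W)$ with $j<\min\{q,k\}$, check that $j=\min\{q,k\}-1$ is feasible because $\max\{q,k\}\le d-1$, note that the stratum dimension strictly increases in $j$, and conclude by finite stability; the case $q=d$ is handled the same way as in the paper.
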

\begin{proof}
Set $m=\min\{q,k\}$ and $j_0=\max\{0,q+k-d\}$. By Lemma~\ref{lem:interior} and \eqref{eq:exceptional}, we have
\begin{equation}\label{eq:union}
\mathcal E_k(F)=\bigcup_{j=j_0}^{m-1}\mathcal S_j(W).
\end{equation}
For $q<d$, we have $j_0\le m-1$: if $q\le k$, this follows from $k\le d-1$, and if $k\le q$, it follows from $q\le d-1$. Thus the union is nonempty. The dimension in \eqref{eq:stratum-dimension} strictly increases with $j$ in this range, since for $j<m$,
\[
\bigl[k(d-k)-(q-j-1)(k-j-1)\bigr]
-\bigl[k(d-k)-(q-j)(k-j)\bigr]=q+k-2j-1>0.
\]
Finite the stability of both dimensions and \eqref{eq:union} therefore give
\[
\dH\mathcal E_k(F)=\dP\mathcal E_k(F)
=k(d-k)-(q-m+1)(k-m+1).
\]
One of $q-m$ and $k-m$ is $0$, so the last product is $|q-k|+1$, proving \eqref{eq:flat}. If $q=d$, the only feasible rank in \eqref{eq:feasible} is $j=k$, so no projection is exceptional.
\end{proof}

\begin{corollary}\label{cor:negative}
For every $2\le k<d$, the inequality in Question 17.8.1 fails for a compact cube contained in a $k$-dimensional subspace of $\R^d$.
\end{corollary}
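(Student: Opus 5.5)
The plan is to apply Theorem~\ref{thm:flat} with $q=k$. Fix $2\le k<d$ and a $k$-dimensional subspace $W\subset\R^d$. Let $F\subset W$ be a compact cube, for instance the image of $[0,1]^k$ under a linear isometry of $\R^k$ onto $W$. This cube contains a nonempty relatively open subset of $W$, namely the image of $(0,1)^k$. Since $q=k<d$, the hypotheses of Theorem~\ref{thm:flat} hold. We also have $|q-k|=0$, so \eqref{eq:flat} gives
\[
\dH\mathcal E_k(F)=k(d-k)-1 .
\]
By Lemma~\ref{lem:interior}, $\dA F=k$, so the threshold $\min\{k,\dA F\}$ in \eqref{eq:exceptional} equals $k$, as expected.

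The remaining step is to compare this value with the bound $k(d-1-k)$ in \eqref{eq:question}. The difference is
\[
\bigl[k(d-k)-1\bigr]-k(d-1-k)=k-1\ge1 \qquad(k\ge2),
\]
so $\dH\mathcal E_k(F)>k(d-1-k)$ and the inequality fails. I would also note why the hypothesis $k\ge 2$ is needed: when $k=1$ the difference is $0$, and the cube is then a segment, which is the motivating example for \eqref{eq:question}. This explains why the corollary does not extend to $k=1$.

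There is no real obstacle, since the corollary is a direct specialization of Theorem~\ref{thm:flat}. The only points to check are that a compact cube does contain a relatively open subset of $W$, and that Lemma~\ref{lem:strata} applies to the stratum $j=k-1$. For that stratum, the feasibility condition \eqref{eq:feasible} becomes $\max\{0,2k-d\}\le k-1$, which holds because $k\le d-1$. For a concrete illustration I would mention the case $d=3$, $k=2$: a square in $\R^3$ has exceptional set of dimension $1$, while the proposed bound is $0$.
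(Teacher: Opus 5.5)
Your proposal is correct and is essentially the paper's own proof. You apply Theorem~\ref{thm:flat} with $q=k$ to get $k(d-k)-1$ and observe that this exceeds $k(d-1-k)$ by $k-1\ge1$. Your extra checks, that the cube has nonempty relative interior and that the rank-$(k-1)$ stratum is feasible, are sound and merely spell out details the paper leaves implicit.
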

\begin{proof}
Apply Theorem~\ref{thm:flat} with $q=k$. It gives that
\[
\dH\mathcal E_k(F)=k(d-k)-1=k(d-1-k)+(k-1)>k(d-1-k).
\]
\end{proof}

\begin{example}\label{ex:square}
Let $F=[-1,1]^2\times\{0\}\subset\R^3$ and $W=\R^2\times\{0\}$. Write a plane $V$ as $n^\perp$, where $n\in S^2$ is a unit normal, with $n$ and $-n$ representing the same plane. If $n\notin W$, then $W\cap\R n=\{0\}$, so $P_V|_W$ has rank $2$ and $\dA P_VF=2$. If $n\in W$, then its rank is $1$, and $P_VF$ is a nondegenerate line segment. Therefore, we have
\[
\mathcal E_2(F)=\{n^\perp:n\in W\cap S^2\}.
\]
This is a projective circle, with Hausdorff and packing dimensions $1$. The proposed bound is $2(3-1-2)=0$.
\end{example}

For $q=1$, formula \eqref{eq:flat} reduces to $k(d-1-k)$, recovering the motivating line example. For $q=k$, the largest exceptional stratum has codimension $1$ in the full Grassmannian. Consequently any bound valid for \emph{all} nonempty source sets and fixed $d,k$ must be at least $k(d-k)-1$. This necessary lower limit on a possible bound does not prove a general upper bound of that size. At the full-dimensional endpoint $q=d$, the exceptional set is empty, rather than the value obtained by formally extending \eqref{eq:flat} outside its range.

\section{Planar exceptions and planar sets in higher dimensions}\label{sec:planar}
We use the following theorem with its full set-theoretic scope.
\begin{theorem}[Wu \cite{Wu2025}, Theorem 1.15]\label{thm:wu}
For every nonempty set $F\subset\R^2$, the set
\[
\{L\in\Gr(1,2):\dA P_LF<\min\{1,\dA F\}\}
\]
is at most countable.
\end{theorem}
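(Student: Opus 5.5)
The paper uses this as an external premise. If I were to prove it, I would reduce it to a projection statement for weak tangents and then add a quantitative finiteness argument.

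\emph{Step 1 (stratification).} For rationals $0\le t<\min\{1,\dA F\}$, write $\mathcal E^{t}=\{L:\dA P_LF\le t\}$, so the exceptional set is $\bigcup_t \mathcal E^t$. It suffices to show that each $\mathcal E^t$ is countable. I would refine this further by the constant $C$ in the definition of $\dA$: let $\mathcal E^{t,C}$ be the set of $L$ with $N_r(P_LF\cap B(y,R))\le C(R/r)^{t'}$ for all $y\in P_LF$ and all $r<R$, with $t'\in(t,\min\{1,\dA F\})$ fixed. A limiting argument using $\|P_L-P_{L'}\|\to0$ (changing $C$ only by a constant factor and $r$ by a bounded factor) shows that the closure of $\mathcal E^{t,C}$ lies in $\mathcal E^{t,C'}$. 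So we may work with compact pieces.

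\emph{Step 2 (weak tangents).} Choose $s$ with $t'<s<\dA F$. By the Käenmäki--Ojala--Rossi theorem, there is a weak tangent $T$ of $F$ with $\dH T\ge s$ (for example, $\mathcal H^{s}(T)>0$). Projections commute with the blow-ups defining weak tangents, up to a subsequence. Hence for every $L\in\mathcal E^{t,C}$, $P_LT$ lies in a weak tangent of $P_LF$ and inherits the covering bound with exponent $t'$. Since this bound is uniform in $L$, a single blow-up sequence works simultaneously for all such $L$. The net effect is a set $T$ of Hausdorff dimension $\ge s>t'$ together with a compact set of directions along each of which $T$ projects to a set that is $t'$-regular \emph{at all scales}, uniformly.

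\emph{Step 3 (main obstacle: from ``small'' to countable).} Marstrand and Kaufman give only $\dH\{L:\dH P_LT<\min\{1,s\}\}\le \min\{1,s\}$, and Orponen's argument gives dimension $0$. Neither yields countability. Here I would exploit the uniformity at all scales. Suppose $\mathcal E^{t,C}$ is uncountable. Then it contains a perfect compact set $K$, and I would blow up in the direction space around a condensation point $L_0$ of $K$, choosing scales $\delta$ adapted to the gaps of $K$. The rescaled map $x\mapsto(P_{L}x-P_{L_0}x)/\delta$ turns nearby exceptional directions into a configuration with at least two well-separated directions at each scale. Iterating, this produces $\gtrsim \log(R/r)$ scales at which two transversal projections of a piece of $T$ are both $t'$-small. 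Two transversal projections control the set: their product has $N_r\lesssim C^2(R/r)^{2t'}$. By itself this is too weak when $t'\ge s/2$, so the counting must be combined with a Bourgain/Ren--Wang-type Furstenberg estimate applied at the many scales. The intended conclusion is $N_r(T\cap B(x,R))\lesssim (R/r)^{s-\eta}$, contradicting $\dH T\ge s$. This multiscale combination is where Wu's actual argument lies, and I expect it to be the genuinely hard step. The other steps are routine compactness and tangent manipulations.
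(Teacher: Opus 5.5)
Your Steps 1 and 2 are sound bookkeeping. The closure claim holds because you can compare finite separated subsets of $P_LF\cap B(y,R)$, so an unbounded $F$causes no trouble. You should apply K\"aenm\"aki--Ojala--Rossi to $\overline F$. But these steps do not reduce the difficulty. You arrive at the statement that, for a compact $T$ with $\dH T\ge s>t'$, any compact family of lines $L$ along which the sets $P_LT$ satisfy a uniform covering bound with exponent $t'$ is countable. That statement follows immediately from the theorem applied to $T$, since $t'<\min\{1,\dH T\}\le\min\{1,\dA T\}$. So everything rests on Step 3, where you only state an intended conclusion. The paper does not supply this step either; it imports the theorem from Wu. As the paper describes it, Wu's proof is not a multiscale Furstenberg-estimate argument. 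It passes to a microscopic limit supporting a measure with ergodic scenery and applies a countable-exception theorem for projections of such measures.

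More seriously, the mechanism you sketch cannot be completed as stated. The configuration you extract also occurs for countable families of directions, where no contradiction is possible.

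\emph{The extracted configuration is not contradictory.} Let $C=\{\sum_{i\ge1}a_i4^{-i}:a_i\in\{0,3\}\}$ and $T=C\times C$, so that $\dH T=\dA T=1$ and $\mathcal H^1(T)>0$. Under $(x,y)\mapsto x+ty$, the image of $T$ is $C$ when $t=0$. When $t=4^{-n}$, the image is a union of $2^n$ translates of $4^{-n}S$, separated by gaps comparable to their size. Here $S=\{\sum_{j\ge1}c_j4^{-j}:c_j\in\{0,3,6\}\}$ is a self-similar set with strong separation and dimension $\log3/\log4$. Orthogonal projection onto $\R(1,t)$ is this map followed by a similarity of ratio in $[2^{-1/2},1]$. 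Hence the lines of slopes $\{0\}\cup\{4^{-n}:n\ge0\}$ form a compact family with a uniform covering bound of exponent $\log3/\log4<1$. Each $n$ supplies a second exceptional line at angle $\asymp4^{-n}$ from $L_0=\R\times\{0\}$. This is exactly your ``two transversal $t'$-small projections at $\gtrsim\log(R/r)$ scales.'' Yet $T$ is Ahlfors $1$-regular, so no bound $N_r(T\cap B(x,R))\lesssim(R/r)^{s-\eta}$ with $s-\eta<1$ can hold.

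\emph{Uncountability does not give you those scales.} Consider the perfect set $\{\sum_n\epsilon_n2^{-2^{2^n}}:\epsilon_n\in\{0,1\}\}$, placed in an angular chart. It has upper box dimension $0$. The distance from any of its points to any other is comparable to $2^{-2^{2^n}}$ for some $n$. So only about $\log\log\log(1/\delta)$ of the dyadic scales in $[\delta,1]$ carry a second direction. Bourgain- or Ren--Wang-type estimates need a positive-exponent non-concentration condition on the directions, and such a set violates it at every scale.

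What is missing is a qualitative input that separates countable from uncountable exceptional sets. A tangent with $\mathcal H^s(T)>0$ carries no such structure. In Wu's proof, this role is played by the ergodic scenery of the measure on the microscopic limit.
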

No compactness, Borel assumption, or regularity condition on $F$ is imposed in this theorem. The proof in \cite[Section 5.2]{Wu2025} passes from a set to a microscopic limit supporting a measure with ergodic scenery, and uses the countable-exception theorem for projections of that scenery. We invoke the set theorem directly; no such measure hypotheses are added to our statements.

\begin{corollary}\label{cor:packing}
For every nonempty $F\subset\R^2$, $\dP\mathcal E_1(F)=0$.
\end{corollary}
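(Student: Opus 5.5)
The corollary follows from Wu's countability result (Theorem~\ref{thm:wu}) together with a basic fact: an at most countable subset of a metric space has packing dimension $0$. The plan is to identify $\mathcal E_1(F)$, as defined in \eqref{eq:exceptional} with $d=2$ and $k=1$, with the set in Theorem~\ref{thm:wu}. Both are $\{L\in\Gr(1,2):\dA P_LF<\min\{1,\dA F\}\}$, so $\mathcal E_1(F)$ is at most countable for every nonempty $F\subset\R^2$. No measurability or regularity hypothesis on $F$ is needed, because Wu's theorem imposes none.

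Next I verify that countable sets have packing dimension $0$ in the metric $\|P_L-P_{L'}\|$. A singleton $\{L\}$ has upper box dimension $0$, since one ball of any radius covers it. Packing dimension satisfies
\[
\dP E=\inf\Bigl\{\sup_i\overline{\dim}_{\mathrm B}E_i : E\subset\bigcup_i E_i\Bigr\},
\]
so covering $\mathcal E_1(F)$ by its countably many singletons gives $\dP\mathcal E_1(F)\le 0$. Equivalently, packing dimension is countably stable and vanishes on points. If $\mathcal E_1(F)$ is empty, its packing dimension is $0$ by the convention fixed in Section~\ref{sec:ranks}. Since dimensions are nonnegative, equality holds.

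There is no real obstacle here. The only point needing care is confirming that the paper's conventions match Wu's statement: the Grassmannian indexing, the use of $\min\{1,\dA F\}$, and the fact that Wu's theorem applies to arbitrary nonempty sets. Once these are checked, the corollary is immediate.
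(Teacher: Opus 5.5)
Your proposal is correct and follows the paper's proof. Both arguments take countability of $\mathcal E_1(F)$ from Wu's theorem, then use countable stability of packing dimension with singletons of dimension $0$, and settle the empty case by the convention of Section~\ref{sec:ranks}.
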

\begin{proof}
If the exceptional set is nonempty, write it as a finite or countable union of singletons. Each singleton has packing dimension $0$, and packing dimension is countably stable. Theorem~\ref{thm:wu} gives the conclusion; the empty case follows from our convention. Thus Question 17.8.2 is answered by Wu's theorem.
\end{proof}

\begin{proposition}\label{prop:planar-lift}
Let $d\ge2$, $1\le k<d$, and let $F$ be a nonempty subset of a two-dimensional subspace $W\subset\R^d$. Put $s=\dA F$.
\begin{enumerate}
\item\label{item:thin} If $k=1$ or $0<s\le1$, then
\[
\dH\mathcal E_k(F)\le k(d-1-k)\quad\text{and}\quad
\dP\mathcal E_k(F)\le k(d-1-k).
\]
If $s=0$, then $\mathcal E_k(F)=\varnothing$.
\item\label{item:thick} If $k\ge2$ and $1<s\le2$, then
\[
\mathcal E_k(F)=\mathcal S_0(W)\cup\mathcal S_1(W)\quad\text{and}\quad
\dH\mathcal E_k(F)=\dP\mathcal E_k(F)=k(d-1-k)+1,
\]
where an infeasible rank stratum is omitted.
\end{enumerate}
\end{proposition}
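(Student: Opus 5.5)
The plan is to sort each $V\in\Gr(k,d)$ by $j=\rank(P_V|_W)\in\{0,1,2\}$ and compute $\dA P_VF$ in each case, so as to reduce the question to a planar one.

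\emph{Rank $2$.} $P_V|_W$ is a linear injection of $W$ onto $P_VW$. It is therefore bi-Lipschitz onto its image, so $\dA P_VF=s\ge\min\{k,s\}$. Such $V$ are never exceptional.

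\emph{Rank $0$.} $P_VF$ is a singleton with dimension $0$. So $V$ is exceptional exactly when $s>0$.

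\emph{Rank $1$.} Let $\ell=W\cap V^\perp$, which is a line, and let $L=W\cap\ell^\perp$. Then $P_V|_W=P_V\circ P_L$ on $W$, because $P_V$ kills $\ell$. Also $P_V|_L$ is an injective linear map from a line onto a line, hence a similarity. It follows that $\dA P_VF=\dA P_LF$, where $P_L$ is the orthogonal projection inside the plane $W\cong\R^2$.

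A key simplification replaces the bundle description of Lemma~\ref{lem:strata}. We have $\ell\subset V^\perp$ if and only if $V\subset\ell^\perp$. Hence the set of $V$ of rank at most $1$ whose kernel contains a given line $\ell\subset W$ is exactly $\Gr(k,\ell^\perp)$. This is an isometrically embedded copy of $\Gr(k,d-1)$, of dimension $k(d-1-k)$; it is empty if $k=d-1$ only in the degenerate sense of being a single point, which is harmless. Moreover, $\mathcal S_0(W)\subset\Gr(k,\ell^\perp)$ for every such $\ell$.

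\emph{Part \ref{item:thin}.} If $s=0$, then $\min\{k,s\}=0$ and nothing is exceptional. Otherwise, when $k=1$ or $s\le1$, we have $\min\{k,s\}=\min\{1,s\}=\min\{1,\dA F\}$, with $F$ viewed in $W\cong\R^2$.
\begin{itemize}
\item A rank-$1$ $V$ is exceptional only if $L=W\cap\ell^\perp$ lies in Wu's exceptional set from Theorem~\ref{thm:wu}, which is at most countable.
\item Rank-$2$ $V$ are not exceptional.
\item $\mathcal S_0(W)$ sits inside any single $\Gr(k,\ell_0^\perp)$.
\end{itemize}
Hence $\mathcal E_k(F)$ is contained in a countable union of copies of $\Gr(k,d-1)$. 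Countable stability of Hausdorff and packing dimension then gives both bounds $k(d-1-k)$.

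\emph{Part \ref{item:thick}.} Here $k\ge2$ and $s>1$, so $\min\{k,s\}=s>1$. Every $V$ of rank at most $1$ has $\dA P_VF\le1<s$, because the image lies in a line or a point. Rank-$2$ $V$ are not exceptional. Therefore $\mathcal E_k(F)=\mathcal S_0(W)\cup\mathcal S_1(W)$.

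By Lemma~\ref{lem:strata} with $q=2$, $\mathcal S_1(W)$ is feasible since $1\ge 2+k-d$, and it has dimension $k(d-k)-(k-1)=k(d-1-k)+1$. The stratum $\mathcal S_0(W)$, when feasible, has the smaller dimension $k(d-k)-2k$. Finite stability gives the equality for both Hausdorff and packing dimension.

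The main point needing care is the rank-$1$ reduction $\dA P_VF=\dA P_LF$. This requires that the factorization through $P_L$ is exact on $W$ and that the residual map is a similarity, using invariance of $\dA$ under bi-Lipschitz maps. Once that holds, the only nontrivial input is Wu's countability theorem, applied to an arbitrary set as permitted by Theorem~\ref{thm:wu}.
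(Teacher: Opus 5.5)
Your proof is correct and follows essentially the same route as the paper: you split by rank, reduce rank one to a planar projection via the similarity $P_V|_L$ (the paper writes this as $P_Vw=\langle w,u\rangle P_Vu$), use Wu's countability theorem to cover $\mathcal E_k(F)$ by countably many copies $\Gr(k,\ell^\perp)\cong\Gr(k,d-1)$, and apply Lemma~\ref{lem:strata} with $q=2$ for the thick case. The only difference is cosmetic: you place $\mathcal S_0(W)$ inside a single $\Gr(k,\ell_0^\perp)$ instead of computing its dimension $k(d-2-k)$ separately, which is equally valid; and your remark about $k=d-1$ should simply say that $\Gr(k,\ell^\perp)$ is then a single point.
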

\begin{proof}
If $\rank(P_V|_W)=2$, then $P_V|_W$ is bi-Lipschitz onto its image and $\dA P_VF=s$. If its rank is $0$, then $P_VF$ is a singleton. If its rank is $1$, write $L=W\cap V^\perp$ and choose a unit vector $u\in W\cap L^\perp$. For $w\in W$,
\begin{equation}\label{eq:rank-one}
P_Vw=\langle w,u\rangle P_Vu,\qquad P_Vu\ne0.
\end{equation}
Thus the image in \eqref{eq:rank-one} is a similarity of the scalar projection of $F$ onto $\R u$.

\textit{(\ref{item:thin}).} Suppose first that $s>0$. Identify $W$ isometrically with $\R^2$, and define
\[
\mathcal C=\{L\in\Gr(1,W):\dA P_{W\cap L^\perp}F<\min\{1,s\}\}.
\]
By Theorem~\ref{thm:wu}, $\mathcal C$ is at most countable. In the present parameter range $\min\{k,s\}=\min\{1,s\}$. Equation \eqref{eq:rank-one} therefore gives
\begin{equation}\label{eq:lift-cover}
\mathcal E_k(F)\subset\mathcal S_0(W)
\ \cup\!\bigcup_{L\in\mathcal C}\{V\in\Gr(k,d):V\subset L^\perp\}.
\end{equation}
Each set indexed by $L$ is a copy of $\Gr(k,d-1)$ and has both dimensions $k(d-1-k)$. The rank-$0$ set is empty if $k>d-2$; otherwise it is $\Gr(k,W^\perp)$, of dimension $k(d-2-k)$. Countable stability in \eqref{eq:lift-cover} proves the two upper bounds. If $s=0$, no nonempty projection has negative Assouad dimension, so \eqref{eq:exceptional} is empty.

\textit{(\ref{item:thick}).} Now $\min\{k,s\}=s>1$. Every image of rank at most $1$ has Assouad dimension at most $1$ and is exceptional. Rank $2$ is not exceptional by bi-Lipschitz invariance. Since $d>k\ge2$, rank $1$ is feasible by \eqref{eq:feasible} with $q=2$. Lemma~\ref{lem:strata} gives its dimension as
\[
k(d-k)-(2-1)(k-1)=k(d-1-k)+1.
\]
If rank $0$ is feasible, its dimension is smaller, so the same value holds for the union.
\end{proof}

The second part does not require relative interior or any measure regularity of $F$: only $F\subset W$ and $\dA F>1$ are used. The first part gives the original proposed bound for a restricted class of sources. It does not establish that bound for arbitrary sets in $\R^d$ when $k=1$ and $d\ge3$. Together with the flat computation, it distinguishes the known planar answer from the different rank geometry of higher-dimensional projections.

 \section*{Acknowledgments}
 The author acknowledges support from his institution, including the Analysis, Probability \& Fractals Laboratory (Grant No. LR18ES17).
\vskip5mm
\noindent{\textbf{Statements and Declarations}}
\vskip4mm
\noindent{\textbf{Competing Interests}}
\vskip3mm
The authors declare that they have no known competing financial interests or personal relationships that could have appeared to influence the work reported in this article.
\vskip4mm
\noindent{\textbf{Data Availability}}
\vskip3mm
No data was used for the research described in this article.
\bibliographystyle{plain}
\bibliography{references}
\end{document}